\newtheorem{theorem}{Theorem}
\newtheorem{corollary}[theorem]{Corollary}
\newtheorem{lemma}[theorem]{Lemma}
\newtheorem{proposition}[theorem]{Proposition}
\newtheorem{maintheorem}{Theorem}
\newtheorem*{mainquestion}{Question}
\theoremstyle{remark}
\newtheorem*{remark}{Remark}
\numberwithin{equation}{section}
\newcommand{\bC}{\mathbb{C}}
\newcommand{\bP}{\mathbb{P}}
\newcommand{\bQ}{\mathbb{Q}}
\newcommand{\bZ}{\mathbb{Z}}
\newcommand{\cG}{\mathcal{G}}
\newcommand{\ord}{\textup{ord}}
\newcommand{\PGL}{\textup{PGL}}
\newcommand{\Aut}{\textup{Aut}}
\begin{document}


\baselineskip=17pt


\title[Misiurewicz poly. for rational maps with nontrivial aut.]{Misiurewicz polynomials for rational maps with nontrivial automorphisms}

\author{Minsik Han}
\address{Department of Mathematics, Box 1917, Brown University, Providence, RI 02912, USA}
\email{minsik\_han@brown.edu}

\date{}

\begin{abstract}
In this paper, we consider a one-parameter family of degree $d\ge 2$ rational maps with an automorphism group containing the cyclic group of order $d$. We construct a polynomial whose roots correspond to parameter values for which the corresponding map is post-critically finite with a certain dynamical portrait. Then we prove that the polynomial is irreducible in certain cases.
\end{abstract}

\subjclass[2010]{37P05}

\keywords{Arithmetic dynamics, Rational maps, Automorphisms, Gleason polynomials, Misiurewicz polynomials, Irreducibility}

\maketitle

\section{Introduction and main results}
\label{section:intro}

A rational map is called post-critically finite if the union of the forward orbits of its critical points is finite. The study of post-critically finite rational maps is one of main topics in arithmetic dynamics. Specifically, if we have a family of rational maps parametrized by a variable (or variables), it is natural to ask which values of the variable(s) make the map be post-critically finite with a certain dynamical portrait.

For example, a polynomial map $\varphi_c(z)=z^2+c$ has a unique critical point $0$, so this map is post-critically finite if and only if $0$ is preperiodic. In particular, $0$ is periodic if and only if $c$ is a root of a polynomial $\varphi_c^n(0)=0$ for some $n$. Gleason proved that this polynomial, called \emph{Gleason polynomial}, has simple roots for all $n$, and Epstein \cite[Appendix]{epstein} and Hutz and Towsley \cite{hutz} extended this result to arbitrary degrees and strictly preperiodic cases. 

Once the polynomial is defined, it is natural to ask if it is irreducible. There were a lot of studies about irreducibility of the Gleason polynomial associated to the family of unicritical polynomial maps $z \mapsto z^d+c$, but it is still open in general. For example, Goksel \cite{goksel} proved that the Gleason polynomial is irreducible in certain cases, which is generalized by Buff, Epstein, and Koch \cite{buff}. On the other hand, Buff \cite{buff2} gave an example of reducible Gleason polynomials. \cite{buff2} also dealt with polynomials which arise in strictly preperiodic cases, called \emph{Misiurewicz polynomials}, as well as Goksel \cite{goksel2}.

However, there is not much research about rational maps, especially of higher degree, since in most cases there are lots of critical points whose behaviors are different. However, if we consider a family of rational maps which have certain nontrivial automorphisms, those automorphisms may restrict the dynamical behavior of critical points. In this manner, we are able to study the values of parameters which make such maps post-critically finite with a certain dynamical portrait.

Specifically, we consider a one-parameter family of degree $d$ rational maps: \[\left\{\phi_a (z) = \frac{az}{z^d+(d-1)}:a\ne 0\right\}\] whose automorphism group contains $C_d$, the cyclic group of order $d$. In Section \ref{section:construction}, we construct Misiurewicz polynomial $G_m \in \bZ[a]$ whose roots are the $a$-values such that each finite critical point $\gamma$ of $\phi_a$ is preperiodic, with $\phi_a^m(\gamma)$ fixed. Then we prove the following.

\begin{maintheorem}
\label{mainthm}
Suppose that $d\ge 3$ is prime. Then the $m$\textsuperscript{th} Misiurewicz polynomial $G_m$ is irreducible over $\bQ$ for all $m\le 3$.
\end{maintheorem}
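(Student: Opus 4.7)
The plan is to carry out the substitution $a = d(1+y)$ and to analyze $G_m(y)$ via an Eisenstein/Newton-polygon argument at the prime $d$. This substitution sends the special parameter $a = d$ --- at which each $d$th root of unity is a super-attracting fixed point of $\phi_d$ of local degree $2$ --- to $y = 0$, which lets one read off the $d$-adic behavior of $G_m$ very cleanly.

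For $m = 1$, using $\phi_a(1) = a/d = 1+y$ and the equation $z^d = a-d+1 = 1+dy$ for the nonzero fixed points of $\phi_a$, the condition that $\phi_a(1)$ is a fixed point becomes $(1+y)^d = 1+dy$, i.e.\ $\sum_{k=2}^{d}\binom{d}{k}y^k = 0$. Factoring out the $y^2$ coming from the super-attracting ``$\gamma$ already fixed'' contribution yields, up to a unit,
\[
G_1(y) = \binom{d}{2} + \binom{d}{3}y + \binom{d}{4}y^2 + \cdots + \binom{d}{d}y^{d-2}.
\]
For $d$ an odd prime, $d \mid \binom{d}{k}$ for every $1 \le k \le d-1$ and $d^2 \nmid \binom{d}{2} = d(d-1)/2$, so Eisenstein at $d$ gives irreducibility of $G_1$.

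For $m = 2$ and $m = 3$, I would compute $G_m(y)$ explicitly using the recursion $\phi_a^{n+1}(1) = a\phi_a^n(1)/(\phi_a^n(1)^d + d - 1)$ and then analyze its $d$-adic Newton polygon. The super-attracting dynamics at $y = 0$ forces the low-order coefficients of $G_m$ to have large $d$-adic valuation while the top coefficients have small valuation, and I expect the Newton polygon to consist of one long segment of slope $-p/q$ in lowest terms with horizontal length equal to $q$ (so it contributes an irreducible factor of degree $q$ over $\bQ_d$), together with at most one short complementary segment of length $1$. Any rational factorization of $G_m$ must refine its $\bQ_d$-factorization, so the only reducibility compatible with this over $\bQ$ would come from a rational root matching the short segment, which I would then rule out via the Rational Root Theorem. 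As a concrete check, for $d = 3$ and $m = 2$ one computes
\[
G_2(y) = 3y^6 + 19y^5 + 60y^4 + 126y^3 + 171y^2 + 135y + 54,
\]
whose $3$-adic Newton polygon has segments of slopes $-3/5$ (length $5$) and $+1$ (length $1$). Since $\gcd(3,5) = 1$, the first segment contributes an irreducible degree-$5$ factor over $\bQ_3$, and $G_2$ is easily checked to have no rational root, so $G_2$ is irreducible over $\bQ$.

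The main obstacle will be verifying that the Newton polygon has the expected shape uniformly in the odd prime $d$, particularly for $m = 3$. This amounts to controlling the $d$-adic valuations of all the coefficients of $G_m(y)$ after the substitution. A helpful structural input is that, because $\gamma = 1$ is super-attracting of local degree $2$ under $\phi_d$, the iterates $\phi_a^n(1)$ agree with the nearby fixed point $\beta_a = (1+dy)^{1/d}$ to an order in $y$ that doubles with each step; this should pin down which coefficients of $G_m$ have small $d$-adic valuation and thereby determine the Newton polygon.
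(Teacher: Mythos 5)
Your overall strategy --- substitute $a = d(1+y)$ (equivalently the paper's $a = (b+1)d$) and analyze the $d$-adic Newton polygon --- is exactly the paper's, and your $m=1$ argument is correct and matches the paper's Proposition~\ref{g1} up to scalar normalization. For $m=2$ and $m=3$, however, the proposal describes the expected conclusion rather than proving it, and there are two genuine gaps.

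First, for $m=2$, you correctly guess the Newton polygon shape (a long segment of slope $-d/(d^2-d-1)$ plus a length-one segment of slope $+1$), but you do not establish it uniformly in $d$. The paper does this by writing $bd\cG_2 = -bd(d-1)\sigma^{d-1} - (bd+1)\sum_{k=0}^{d-2}\binom{d}{k}\sigma^k\tau^{d-1-k}$ with $\sigma=(b+1)ds_1^d$, $\tau=s_2-\sigma$, and carefully bounding valuations term by term. Moreover, ``ruling out a rational root by the Rational Root Theorem'' is not an argument: RRT only produces candidates (here $y = e/d$ with $e \mid (d-1)$), and one must actually show none of these are roots. The paper does so by a $d$-adic valuation count in the identity $e(d-1)\sigma^{d-1} = -(e+1)\sum_k\binom{d}{k}\sigma^k\tau^{d-1-k}$, which shows the two sides cannot have equal valuation. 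For a fixed small $d$ this is a finite check, but for arbitrary prime $d$ you need the valuation argument, not a mere appeal to RRT.

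Second, and more seriously, $m=3$ is essentially untouched. Your heuristic (doubling of contact order with the nearby fixed point at the superattracting parameter) is a reasonable guide but does not pin down the Newton polygon, and your prediction is in fact slightly off: for $m=3$ the Newton polygon of $\cG_3$ consists of a \emph{single} segment from $(0,d^3-1)$ to $(d^3-d^2-1,d^3-d^2-1)$ with no interior lattice points --- there is no complementary length-one segment, so Eisenstein applies directly and no rational-root step is needed. Establishing that single-segment shape is the technical heart of the paper (Proposition~\ref{taunp} computing $N_d(\tau)$ for $\tau = s_3 - (b+1)ds_2^d$, via Lemma~\ref{orderlemma} on valuations of coefficients of $d$-th powers, and then the three claims (i)--(iii) in the final proof). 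None of this is present in the proposal, so as written the $m=2,3$ cases are a plan rather than a proof.
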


This theorem could be generalized to the following question.

\begin{mainquestion}
\label{mainq}
For any positive integer $d\ge 2$ and any $m\ge 1$, is the $m$\textsuperscript{th} Misiurewicz polynomial $G_m$ irreducible over $\bQ$?
\end{mainquestion}

Calculation shows that this is true for all $d\le 12$ and $m\le 4$.

\section{Construction of Misiurewicz polynomial}
\label{section:construction}

Let $\phi:\bP^1 \to \bP^1$ be a rational map. For $f\in \PGL_2$, the $\PGL_2$-conjugation of $\phi$ is defined as \[\phi^f := f^{-1} \circ \phi \circ f.\] $f$ is called an \emph{automorphism} of $\phi$ if $\phi^f = f$, or equivalently $\phi \circ f = f\circ \phi$. The set of automorphisms of $\phi$ is a subgroup of $\PGL_2$, which is called the \emph{automorphism group} of $\phi$ and denoted by $\Aut(\phi)$. We are often interested in rational maps with nontrivial automorphisms, where $\Aut(\phi)$ has nontrivial elements other than the trival automorphism $f=\textup{id}$.

Now let $\{\phi_a:a\ne 0\}$ be a family of rational maps of degree $d\ge 2$, where \[\phi_a([x,y])=[axy^{d-1},x^d+(d-1)y^d].\] For a primitive $d$\textsuperscript{th} root of unity $\zeta_d$, we can check that \[[x,y]\mapsto [\zeta_d^i x,y],\ \ i=0,1,\cdots,d-1\] are automorphisms of $\phi$  since $\phi_a([\zeta_d^i x,y]) = \zeta_d^i \phi_a([x,y])$. Therefore, $\Aut(\phi_a)$ contains a cyclic subgroup of order $d$, generated by $[x,y] \mapsto [\zeta_d x,y]$.

\begin{remark}
In fact, Miasnikov, Stout, and Williams \cite{miasnikov} proved that any rational map of degree $d\ge 2$ with an automorphism group containing a cyclic subgroup of order $d$ is $\PGL_2$-conjugate to $\phi_a$ for some $a$.
\end{remark}

Now we investigate the forward orbits of critical points of $\phi_a$. The map $\phi_a$ has $(d+1)$ fixed points \begin{equation} \label{fixed} [0,1],\ \ [\sqrt[d]{a-d+1},1],\ \ [\zeta_d \sqrt[d]{a-d+1},1],\ \ \cdots,\ \ [\zeta_d^{d-1} \sqrt[d]{a-d+1},1]\end{equation} and $(d+1)$ critical points \[z=[1,0],\ \ [1,1],\ \ [\zeta_d,1],\ \ \cdots,\ \ [\zeta_d^{d-1},1].\] The trivial critical point $[1,0]$ is always preperiodic, since $\phi_a([1,0])=[0,1]$ is fixed. Also, since $[x,y]\mapsto [\zeta_d x,y]$ generates $C_d$ in the automorphism group, all other critical points have the same orbit structure.

We now define Misiurewicz polynomials associated to this family of maps. First, let $P_0 (x,y) = x$ and $Q_0(x,y) = y$. Then recursively define \begin{equation} \label{eq17} P_{n+1}=aP_n Q_n^{d-1},\ \ Q_{n+1}=P_n^d+(d-1)Q_n^d .\end{equation} In this way, the $n$\textsuperscript{th} iterate of $\phi_a$ is given by $\phi_a^n([x,y])=[P_n(x,y),Q_n(x,y)]$. The $n$\textsuperscript{th} \emph{dynatomic polynomial} is defined by \[\Phi_n(x,y)=\prod_{k\mid n} (Q_k x- P_k y)^{\mu(n/k)}\] where $\mu$ is the Mobius $\mu$-function.

Next, we define the \emph{generalized dynatomic polynomial} \[\Phi_{m,n}(x,y)=\begin{cases}\Phi_n(P_m,Q_m)/\Phi_n(P_{m-1},Q_{m-1}) &  \text{if } m >0, \\ \Phi_n(x,y) &  \text{if } m=0.\end{cases}\] Finally, we define the \emph{pre-Misiurewicz polynomial} as \[\widetilde{G}_{m,n}=\Phi_{m,n}(1,1).\] For example, \begin{align*} \Phi_1(x,y) &=(x^d+(d-1)y^d)x-(axy^{d-1})y\\&=x^{d+1}-(a-d+1)xy^d\\&=x(x^d-(a-d+1)y^d)\end{align*} so \[\widetilde{G}_{0,1}=\Phi_1(1,1)=1-(a-d+1)=-(a-d).\] From this, we see that the parameter value $a=d$ gives a rational function \[\phi_d([x,y]) = [dxy^{d-1},x^d+(d-1)y^d]\] such that every finite critical point is fixed.

\begin{remark}
A root $P=[x,y]\in \bP^1$ of the $n$\textsuperscript{th} dynatomic polynomial $\Phi_n$ is called to have \emph{formal period} $n$. A point $P$ has formal period $n$ if it has primitive period $n$, but the converse is not true in general. Also, the roots of $\Phi_{m,n}$ are the points whose $m$\textsuperscript{th} iterate has formal period $n$. See \cite[Section 4.1]{silverman} for the basic theory of dynatomic polynomials.
\end{remark}

We are currently interested in the nontrivial pre-fixed cases; where $m>0$ and $n=1$. In these cases, we can define \begin{align*} \widetilde{G}_m :=\widetilde{G}_{m,1}&=\Phi_{m,1}(1,1)\\&=\frac{Q_{m+1}(1,1) P_m(1,1)-P_{m+1}(1,1) Q_m(1,1)}{Q_m(1,1)P_{m-1}(1,1)-P_m (1,1)Q_{m-1}(1,1)}\\&=\frac{q_{m+1} p_m - p_{m+1}q_m}{q_mp_{m-1}-p_mq_{m-1}}\end{align*} where \[p_n = P_n(1,1),\ \ q_n = Q_n(1,1) \in \bZ[a].\] Note that $p_0=q_0=1$. We now prove the following proposition:

\begin{proposition}
\label{pgnga}
Suppose that $m\ge 1$. For $d\ge 2$, we have
\[\widetilde{G}_m=a(a-d)q_{m-1}^{d-1} G_m\] where \[G_m = \frac{1}{a-d} \left[ a^d q_{m-1}^{d(d-1)} - (a-d+1) \frac{q_m^d - (aq_{m-1}^d)^d}{q_m - aq_{m-1}^d}\right]\in \bZ[a]. \]
\end{proposition}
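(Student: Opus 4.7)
The proof is essentially a direct algebraic manipulation starting from the definition
\[\widetilde{G}_m = \frac{q_{m+1}p_m - p_{m+1}q_m}{q_m p_{m-1} - p_m q_{m-1}}\]
combined with the recursions $p_{n+1} = a p_n q_n^{d-1}$ and $q_{n+1} = p_n^d + (d-1)q_n^d$.

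\textbf{Step 1: Simplify numerator and denominator separately.} Substituting $p_{m+1} = a p_m q_m^{d-1}$ and $q_{m+1} = p_m^d + (d-1)q_m^d$ into the numerator, the $(d-1)q_m^d$-terms combine with the $-aq_m^d$-term to give
\[q_{m+1}p_m - p_{m+1}q_m = p_m\bigl(p_m^d - (a-d+1)q_m^d\bigr),\]
and the same computation on indices $m-1,m$ yields $q_m p_{m-1} - p_m q_{m-1} = p_{m-1}\bigl(p_{m-1}^d - (a-d+1)q_{m-1}^d\bigr)$.

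\textbf{Step 2: Use $p_m = ap_{m-1}q_{m-1}^{d-1}$ to cancel $p_{m-1}$.} Dividing the two expressions and pulling out the $a q_{m-1}^{d-1}$-factor from $p_m$, I obtain
\[\widetilde{G}_m = \frac{a q_{m-1}^{d-1}\bigl(a^d p_{m-1}^d q_{m-1}^{d(d-1)} - (a-d+1)q_m^d\bigr)}{p_{m-1}^d - (a-d+1)q_{m-1}^d}.\]
The key observation now is that the denominator equals $q_m - aq_{m-1}^d$, because the recursion gives $p_{m-1}^d = q_m - (d-1)q_{m-1}^d$.

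\textbf{Step 3: Eliminate $p_{m-1}^d$ in the numerator and factor.} Replacing $p_{m-1}^d$ by $q_m - (d-1)q_{m-1}^d$ in the numerator and expanding, one checks that
\[a^d p_{m-1}^d q_{m-1}^{d(d-1)} - (a-d+1)q_m^d = a^d q_{m-1}^{d(d-1)}(q_m - aq_{m-1}^d) - (a-d+1)\bigl(q_m^d - (aq_{m-1}^d)^d\bigr),\]
which is precisely the identity obtained from the fact I already verified: the difference of the two sides collapses via $q_m - (d-1)q_{m-1}^d = p_{m-1}^d$. Dividing by $q_m - aq_{m-1}^d$ and using the geometric-series factorization
\[\frac{q_m^d - (aq_{m-1}^d)^d}{q_m - aq_{m-1}^d} = \sum_{j=0}^{d-1} q_m^{d-1-j}(aq_{m-1}^d)^j,\]
I arrive at $\widetilde{G}_m = a q_{m-1}^{d-1}\bigl[a^d q_{m-1}^{d(d-1)} - (a-d+1)\tfrac{q_m^d - (aq_{m-1}^d)^d}{q_m - aq_{m-1}^d}\bigr]$, which matches $a(a-d)q_{m-1}^{d-1}G_m$ once we know the bracket is divisible by $a-d$.

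\textbf{Step 4: Verify integrality of $G_m$.} To see that the bracketed expression is divisible by $a-d$ in $\bZ[a]$, I evaluate at $a=d$. Setting $a=d$ in the recursion, an induction shows $p_n = q_n$ for all $n\ge 0$; writing $v := q_{m-1}|_{a=d}$, one gets $q_m|_{a=d} = dv^d = aq_{m-1}^d|_{a=d}$, so both numerator and denominator of the fraction vanish, but the polynomial form $\sum_{j=0}^{d-1} q_m^{d-1-j}(aq_{m-1}^d)^j$ evaluates to $d\cdot(dv^d)^{d-1} = d^d v^{d(d-1)}$. Since $a-d+1 = 1$ at $a=d$, the bracket becomes $d^d v^{d(d-1)} - d^d v^{d(d-1)} = 0$, confirming divisibility. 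Integrality then follows because $a-d$ is monic.

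The whole argument is mechanical; the only step that needs a little bookkeeping is the algebraic identity in Step 3, where the key shortcut is recognizing that $q_m - aq_{m-1}^d$ coincides with $p_{m-1}^d - (a-d+1)q_{m-1}^d$.
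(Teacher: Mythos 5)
Your proof is correct and follows essentially the same route as the paper's: simplify numerator and denominator via the recursion, pull out the $aq_{m-1}^{d-1}$ factor, use the identity $p_{m-1}^d - (a-d+1)q_{m-1}^d = q_m - aq_{m-1}^d$ to expose the geometric-series form, and establish divisibility by $a-d$ by evaluating at $a=d$. The only cosmetic difference is that the paper introduces the explicit substitutions $X=p_{m-1}^d$, $Y=q_{m-1}^d$ to organize the algebra, whereas you work directly with $p_{m-1}^d$ and $q_{m-1}^d$.
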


We call $G_m$ the $m$\textsuperscript{th} \emph{Misiurewicz polynomial}. Excluding two common roots $a=0$ and $a=d$, other roots of $\widetilde{G}_m$ are roots of either $q_{m-1}$ or $G_m$. They are precisely the $a$-values such that each finite critical point $\gamma$ of $\phi_a$ is preperiodic, with $\phi_a^m(\gamma)$ fixed. In fact, $\phi_a^m(\gamma)=0$ if $a$ is a root of $q_{m-1}$, while $\phi_a^m(\gamma)$ is one of the nonzero fixed points in \eqref{fixed} if $a$ is a root of $G_m$. While it might be interesting to deal with the factor $q_{m-1}$, we focus on $G_m$ in this paper.

\begin{remark}
In the survey paper \cite{benedetto}, this polynomial is also called Gleason polynomial. However, we follow \cite{buff2} and \cite{goksel2} where the term ``Gleason polynomial'' was used only for periodic cases.
\end{remark}

Here we state our main theorem again.

\setcounter{maintheorem}{0}

\begin{maintheorem}
Suppose that $d\ge 3$ is prime. Then the $m$\textsuperscript{th} Misiurewicz polynomial $G_m$ is irreducible over $\bQ$ for all $m\le 3$.
\end{maintheorem}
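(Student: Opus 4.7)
The plan is to prove $\bQ$-irreducibility of $G_m$ by establishing $\bQ_d$-irreducibility via Newton polygon and Hensel lifting. Using Freshman's dream $(x+y)^d\equiv x^d+y^d\pmod d$ and induction on \eqref{eq17}, one checks $q_1\equiv 0$, $p_n\equiv 0\pmod d$ for $n\ge 2$, and $q_n\equiv(-1)^n a^{d^{n-1}}\pmod d$ for $n\ge 2$. Plugging these into Proposition \ref{pgnga} and using $(1+a^d)/(1+a)\equiv(1+a)^{d-1}\pmod d$ yields the mod-$d$ reductions
\[G_1\equiv -a^{d-2},\qquad G_2\equiv -(a+1)\,a^{d^2-d-1},\qquad G_3\equiv -a^{d^3-d^2-1}\pmod d.\]

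For $m=1$ and $m=3$ the mod-$d$ reduction is a pure power of $a$, so all roots of $G_m$ have positive $d$-adic valuation, and I substitute $a=d+u$. For $m=1$, direct expansion gives $G_1(d+u)=-\sum_{k=0}^{d-2}\binom{d}{k+2}d^{d-k-2}u^k$; using $v_d(\binom{d}{k+2})=1$ for $0\le k\le d-3$, the Newton polygon consists of a single segment from $(0,d-1)$ to $(d-2,0)$ with slope $-(d-1)/(d-2)$ in lowest terms (denominator equals length), so $G_1$ is $\bQ_d$-irreducible. For $m=3$, I aim analogously for a single Newton polygon segment of $G_3(d+u)$ from $(0,d^3-1)$ to $(d^3-d^2-1,0)$ with slope $-(d^3-1)/(d^3-d^2-1)$ in lowest terms (using $\gcd(d^3-1,d^3-d^2-1)=\gcd(d^2,d^3-d^2-1)=1$), giving $\bQ_d$-irreducibility.

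For $m=2$ the mod-$d$ factors $(a+1)$ and $a^{d^2-d-1}$ are coprime, so by Hensel's lemma $G_2=F\cdot G$ in $\bZ_d[a]$ with $\deg F=1$ and $\deg G=d^2-d-1$. The Newton polygon of $G_2(d+u)$ should have a long segment from $(0,d^2-1)$ to $(d^2-d-1,0)$ with slope $-(d^2-1)/(d^2-d-1)$ in lowest terms (as $\gcd(d^2-1,d^2-d-1)=\gcd(d,d^2-d-1)=1$), yielding $\bQ_d$-irreducibility of $G$, plus a horizontal segment of length $1$ for the Hensel root of $F$. To upgrade to $\bQ$-irreducibility I rule out rational roots: any integer root $r$ satisfies $r\equiv 0$ or $-1\pmod d$ and divides $G_2(0)=\pm(d-1)^d d^{d^2-d-1}$, whose only prime factors are $d$ and primes dividing $d-1$; this leaves a small, explicit candidate list which can be checked directly. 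Combining $\bQ_d$-irreducibility of $G$ with the absence of rational roots, any nontrivial $\bQ$-factorization of $G_2$ must contain a linear factor, a contradiction, so $G_2$ is $\bQ$-irreducible.

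The main obstacle is verifying that the conjectured Newton polygons are actually realized, i.e., establishing sharp lower bounds on the $d$-adic valuation of every coefficient of $G_m(d+u)$. These bounds must be propagated through the formula of Proposition \ref{pgnga} and through the identity $(X^d-Y^d)/(X-Y)=\sum_{j=0}^{d-1}X^{d-1-j}Y^j$; for $m=3$ in particular, proving that no intermediate Newton polygon point dips below the line from $(0,d^3-1)$ to $(d^3-d^2-1,0)$ for every prime $d\ge 3$ is the most combinatorially delicate step. The rational-root exclusion for $m=2$ is also not uniform in $d$ and requires careful case analysis.
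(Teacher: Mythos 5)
Your plan is essentially the paper's own approach, and your conjectured Newton polygons are correct, but the bulk of the actual proof is flagged rather than carried out, and for $m=2$ your exclusion of the linear factor is weaker than what is needed.

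On the points of agreement: the paper makes the change of variable $a=(b+1)d$ and studies $\cG_m(b)=G_m((b+1)d)$ instead of your $G_m(d+u)$; since $b=u/d$, the two $d$-Newton polygons differ by the shear $(i,v)\mapsto(i,v+i)$, which preserves lattice points and hence gives identical conclusions about factorizations. After this shear your conjectured polygons agree exactly with the paper's: for $m=1$ a single segment (your $(0,d-1)$--$(d-2,0)$ versus the paper's $(0,d-1)$--$(d-2,d-2)$), for $m=3$ a single segment (your $(0,d^3-1)$--$(d^3-d^2-1,0)$ versus the paper's $(0,d^3-1)$--$(d^3-d^2-1,d^3-d^2-1)$), and for $m=2$ a long segment plus a unit step. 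Your mod-$d$ reductions of $G_m$ via Freshman's dream are correct and are a pleasant heuristic that the paper does not state; they predict the shape but not the heights. Your $m=1$ computation is complete and matches the paper.

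The gap is twofold. First, for $m=2$ and especially $m=3$, establishing that the $d$-Newton polygon actually is the conjectured one is the entire content of Sections~\ref{section:g2}--\ref{section:g3}: the paper introduces $\sigma=(b+1)d\,s_{m-1}^d$ and $\tau=s_m-\sigma$, uses $\frac{(\sigma+\tau)^d-\sigma^d}{\tau}=\sum_{k=0}^{d-1}\binom{d}{k}\sigma^k\tau^{d-1-k}$ so that each summand carries an explicit extra factor of $d$ from $\binom{d}{k}$, computes the Newton polygons of $\sigma$ and $\tau$ separately (Proposition~\ref{taunp} is a delicate coefficient-by-coefficient analysis using Lemma~\ref{orderlemma}), and then assembles $N_d(bd\cG_m)$ from the $N_d$ of each term via Proposition~\ref{newtonprod} and the ultrametric inequality. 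You acknowledge this as ``the main obstacle,'' but without that work the claim that the polygons are as stated is unproved, and this is where the theorem actually lives. Second, for $m=2$ your proposed rational-root exclusion (test divisors of $G_2(0)$ case by case) is not uniform in $d$, since $d-1$ can have arbitrarily many prime factors. The paper's argument is uniform: it uses the Newton polygon to pin the would-be linear factor to $b=e/d$ (equivalently $a=d+e$) with $e\mid(d-1)$, and then shows directly from $\ord_d(\sigma)=d$, $\ord_d(\tau)=0$, and equation~\eqref{eq9} that the two sides have incompatible $d$-adic valuations ($d(d-1)$ versus at most $1$). You should adopt that valuation argument rather than a finite case check.
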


\begin{proof}[Proof of Proposition \ref{pgnga}]
Using the definitions of $p_n$ and $g_n$ and the recursion in \eqref{eq17}, we have \begin{equation} \label{eq18} p_{n+1}=ap_nq_n^{d-1},\ \ q_{n+1}=p_n^d+(d-1)q_n^d.\end{equation} Therefore, \begin{align*}\widetilde{G}_m&=\frac{q_{m+1}p_m-p_{m+1}q_m}{q_mp_{m-1}-p_mq_{m-1}}\\&=\frac{(p_m^d+(d-1)q_m^d)p_m-ap_m q_m^d}{(p_{m-1}^d+(d-1)q_{m-1}^d)p_{m-1}-ap_{m-1}q_{m-1}^d}\\&=\frac{p_m(p_m^d-(a-d+1)q_m^d)}{p_{m-1}(p_{m-1}^d-(a-d+1)q_{m-1}^d)} \\&=\frac{aq_{m-1}^{d-1}(p_m^d-(a-d+1)q_m^d)}{p_{m-1}^d-(a-d+1)q_{m-1}^d} \\&= \frac{aq_{m-1}^{d-1}((ap_{m-1}q_{m-1}^{d-1})^d-(a-d+1)(p_{m-1}^d+(d-1)q_{m-1}^d)^d)}{p_{m-1}^d-(a-d+1)q_{m-1}^d} \\&=\frac{aq_{m-1}^{d-1}(a^d p_{m-1}^d (q_{m-1}^d)^{d-1}-(a-d+1)(p_{m-1}^d+(d-1)q_{m-1}^d)^d)}{p_{m-1}^d-(a-d+1)q_{m-1}^d}. \end{align*}

For convenience, here we use substitutions $X=p_{m-1}^d$ and $Y=q_{m-1}^d$. Then \begin{align*} \frac{\widetilde{G}_m}{aq_{m-1}^{d-1}} =& \frac{a^d X Y^{d-1}-(a-d+1)(X+(d-1)Y)^d}{X-(a-d+1)Y} \\  =& \frac{1}{X-(a-d+1)Y} \Bigl[(a^d X Y^{d-1}-a^d(a-d+1)Y^d) \Bigr. \\ & \Bigl. - (a-d+1)((X+(d-1)Y)^d-a^d Y^d) \Bigr]\\ =& a^d Y^{d-1} - (a-d+1)\frac{(X+(d-1)Y)^d-(aY)^d}{(X+(d-1)Y)-aY}. \end{align*} Substituting back and using $X+(d-1)Y=p_{m-1}^d + (d-1)q_{m-1}^d = q_m$, we get \[\widetilde{G}_m=aq_{m-1}^{d-1} \left[ a^d q_{m-1}^{d(d-1)} - (a-d+1) \frac{q_m^d - (aq_{m-1}^d)^d}{q_m- aq_{m-1}^d}\right].\]

Therefore, it suffices to prove that
\begin{equation}
    \label{eq1}
    (a-d) \left| \frac{\widetilde{G}_m}{aq_{m-1}^{d-1}}\right. ,
\end{equation}
where \[\frac{\widetilde{G}_m}{aq_{m-1}^{d-1}} = a^d q_{m-1}^{d(d-1)} - (a-d+1) \frac{q_m^d - (aq_{m-1}^d)^d}{q_m- aq_{m-1}^d}.\] However, if $a=d$ we can prove that \[p_n=q_n=dq_{n-1}^d\] for all $n$ by induction from \eqref{eq18}, so \[\left.\frac{\widetilde{G}_m}{aq_{m-1}^{d-1}}\right|_{a=d} = d^d q_{m-1}^{d(d-1)} - \sum_{i=0}^{d-1} q_m^i (dq_{m-1})^{d-1-i} = d^d q_{m-1}^{d(d-1)} - d (dq_{m-1}^d)^{d-1}=0,\] which implies \eqref{eq1}.
\end{proof}

For example, here are the first few Misiurewicz polynomials for $d=3$. \begin{align*} G_1=&-a-6 \\ G_2=& -a^6 - a^5 - 30a^4 - 144a^3 - 216a^2 - 648a - 1944 \\ G_3 =& 8a^{17} - 705a^{16} - 3573a^{15} - 2295a^{14} - 124983a^{13} - 1142586a^{12} \\& - 1070172a^{11}  - 9587808a^{10}  - 41518008a^9 - 48341448a^8  \\& - 259815600a^7 - 1009029312a^6 - 1088391168a^5- 3265173504a^4 \\ & - 9795520512a^3 - 7346640384a^2 - 22039921152a - 66119763456\end{align*}

\section{Newton Polygons}
\label{section:newton}

In the proof of Theorem \ref{mainthm}, we will use the theory of Newton polygons. For a prime $p$ and a polynomial $f(z)=\sum_{i=0}^n a_i z^i \in \bC_p [z]$ where $a_n \ne 0$, we fix the following notations which will be used throughout this paper.
\begin{itemize}
    \item The $i$\textsuperscript{th} coefficient of $f$ is denoted as $c_i(f):=a_i$.
    \item Its $p$-adic valuation in $\bC_p$ is denoted as \[v_{i,p}(f):=\ord_p(c_i(f))=\ord_p(a_i).\] When $p$ is obvious, we often write just $v_i(f):=v_{i,p}(f)$.
\end{itemize}
Then the \emph{Newton polygon} of $f$ is defined as \[N(f) := \text{Lower convex hull of }\{(i,v_i(f)):i=0,1,2,\cdots,n\}\] defined on $0\le x \le n$. By definition, the Newton polygon is composed of line segments of increasing slopes. (If $a_0=0$, the first line segment may have slope $-\infty$.) The points where the slope changes, along with two endpoints $(0,v_0(f))$ and $(n,v_n(f))$, are called \emph{vertices} of the Newton polygon. Note that any given set of vertices determines at most one Newton polygon. The slopes of the Newton polygon gives a nice explanation about roots of the associated polynomial.

\begin{proposition}
\label{newtonroot}
Consider the Newton polygon of $f$ as above. If a line segment in the Newton polygon has slope $-m$ and horizontal length $\ell$, then exactly $\ell$ roots, counted with multiplicity, of $f$ in $\bC_p$ have $p$-adic valuation $m$.
\begin{proof}
See \cite[IV.4, Lemma 4]{koblitz}. The proof assumes that the constant term is $1$, but the same method can be applied to the case where the constant term is nonzero. Furthermore, using the convention $\ord_p(0)=\infty$ we can extend the proof to the general case. 
\end{proof}
\end{proposition}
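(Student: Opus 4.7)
My approach is to factor $f$ completely over $\bC_p$ and read the slopes and horizontal lengths of $N(f)$ directly off the $p$-adic valuations of the roots, using Vieta's formulas and the non-Archimedean triangle inequality. Two preliminary reductions come first: dividing $f$ by $a_n$ shifts every $v_i(f)$ by $-\ord_p(a_n)$, translating $N(f)$ vertically but preserving both the slopes/horizontal lengths of its segments and the valuations of the roots, so I may assume $f$ is monic. If $a_0 = 0$, write $f(z) = z^r \tilde f(z)$ with $\tilde f(0) \ne 0$; the $r$ zero roots correspond, under the convention $\ord_p(0) = \infty$, to the leftmost slope-$(-\infty)$ segment of length $r$, and it remains to handle $\tilde f$. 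So further assume $a_0 \ne 0$, and write $f(z) = \prod_{j=1}^n (z - \alpha_j)$ with $\alpha_j \in \bC_p^\times$ sorted so that $\ord_p(\alpha_1) \le \cdots \le \ord_p(\alpha_n)$.

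Vieta gives $c_{n-k}(f) = (-1)^k e_k(\alpha_1,\dots,\alpha_n)$, and every term in $e_k$ has $p$-adic valuation at least $\sum_{j=1}^k \ord_p(\alpha_j)$ (the minimum across size-$k$ subsets), so the strong triangle inequality yields
\[ v_{n-k}(f) \;\ge\; \sum_{j=1}^{k} \ord_p(\alpha_j). \]
Let $\gamma$ be the piecewise-linear function on $[0,n]$ defined at integer abscissae by $\gamma(n-k) = \sum_{j=1}^k \ord_p(\alpha_j)$. Its slopes $-\ord_p(\alpha_{n-i})$ are non-decreasing in $i$ (by the sort), so $\gamma$ is convex, and the displayed inequality says every point $(i, v_i(f))$ lies on or above $\gamma$.

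Next I would force equality at the vertex abscissae of $\gamma$. Grouping roots by valuation, let $\mu_1 < \cdots < \mu_s$ be the distinct values of $\ord_p(\alpha_j)$ with multiplicities $\ell_1, \dots, \ell_s$; the internal vertices of $\gamma$ sit exactly at $i = n - (\ell_1 + \cdots + \ell_t)$ for $1 \le t \le s-1$. At such an $i$, with $k = \ell_1 + \cdots + \ell_t$, the term $\alpha_1 \cdots \alpha_k$ is the \emph{unique} minimum-valuation term in $e_k$, since any competing size-$k$ subset must swap some $\alpha_j$ (with $j \le k$, valuation at most $\mu_t$) out for some $\alpha_{j'}$ (with $j' > k$, valuation at least $\mu_{t+1} > \mu_t$), strictly increasing the sum. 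The strong triangle inequality then forces $v_{n-k}(f) = \gamma(n-k)$. Equality at the endpoints $i = 0, n$ is automatic (the constant term is exactly $\pm \prod \alpha_j$, which has no room to cancel, and the leading coefficient is $1$), so $N(f)$ meets $\gamma$ at every vertex of $\gamma$.

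Since $N(f)$ lies on or above $\gamma$ but passes through all vertices of $\gamma$, and $\gamma$ is itself a convex piecewise-linear curve whose corners occur exactly at those abscissae, this forces $N(f) = \gamma$. The segment of slope $-\mu_t$ then has horizontal length $\ell_t$, which is precisely the multiplicity of roots of valuation $\mu_t$, proving the proposition. The main subtlety I anticipate is the case $\ord_p(\alpha_k) = \ord_p(\alpha_{k+1})$ at a non-boundary index, where the minimum-valuation term in $e_k$ need not be unique and $p$-adic cancellations are a priori possible; but $\gamma$ is already linear across such stretches, so the weaker inequality $v_{n-k}(f) \ge \gamma(n-k)$ is enough there, and strict equality is only needed at the batch-transition vertices.
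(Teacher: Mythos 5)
Your proof is correct. Note, though, that the paper does not actually prove this proposition itself: it simply cites \cite[IV.4, Lemma 4]{koblitz} and remarks that the normalization ``constant term $=1$'' can be dropped and that the convention $\ord_p(0)=\infty$ covers a vanishing constant term. Your argument is a self-contained alternative along the classical symmetric-function route: factor $f$ over $\bC_p$, use Vieta and the ultrametric inequality to get $v_{n-k}(f)\ge\sum_{j\le k}\ord_p(\alpha_j)$, then force equality at the batch-transition abscissae because there the minimal-valuation term of $e_k$ is unique, and let convexity pin $N(f)$ to the piecewise-linear comparison curve $\gamma$. All the steps check out, including the two points that need care: the strictness of the swap argument at a batch boundary (every competing size-$k$ subset trades a root of valuation $\le\mu_t$ for one of valuation $\ge\mu_{t+1}$), and your explicit observation that at interior indices of a batch only the inequality is needed since $\gamma$ is linear there. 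A side benefit of your route is that it makes the paper's two remarks concrete: dividing by $a_n$ shows why any nonzero constant term (rather than $1$) only translates the polygon, and factoring out $z^r$ shows exactly how the slope-$(-\infty)$ segment accounts for zero roots.
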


In other words, if we make two multisets, one with roots of $f$ (counted with multiplicity) and another with slopes of line segments in $N(f)$ (counted with horizontal length), then these two multisets have a natural one-to-one correspondence. This idea is the key to understanding an important property of Newton polygons.

\begin{proposition}
\label{newtonprod}
Let $f_i(z)=\sum_j a_{i,j} z^j \in \bC_p[z]$ be a finite number of polynomials with nonzero constant terms. Then the Newton polygon of $\prod f_i$ is the `rearranged concatenation' of Newton polygons of $f_i$'s, constructed by the following method: we gather all line segments of all Newton polygons, arrange them by slope in increasing order, and attach them from the starting point $(0 ,\sum v_0(f_i))$.
\begin{proof}
First, it is obvious that \[v_0 \left(\prod f_i\right) = \ord_p \left(\prod a_{i,0} \right) = \sum \ord_p (a_{i,0}) = \sum v_0(f_i).\] On the other hand, the multiset of roots of $\prod f_i$ is the sum of multisets of roots of $f_i$, so the multiset of slopes of $N(\prod f_i)$ is also the sum of multisets of slopes of $N(f_i)$. This multiset of slopes of $N(\prod f_i)$ and the starting point $(0,\sum v_0(f_i))$ determines a unique Newton polygon, which is equal to that described in the statement.
\end{proof}
\end{proposition}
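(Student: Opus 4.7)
The plan is to exploit Proposition \ref{newtonroot} to turn the geometric claim about Newton polygons into a multiset statement about roots. Any Newton polygon over $\bC_p$ is uniquely determined by (i) its leftmost endpoint and (ii) the multiset of its segment-slopes, each counted with horizontal length (since slopes appear in strictly increasing order, the order in which one attaches segments starting from the leftmost endpoint is forced). So to prove the proposition, I would show that both $N\bigl(\prod f_i\bigr)$ and the ``rearranged concatenation'' have the same starting point and the same multiset of slopes with lengths.

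For the starting point, I would use the fact that the leftmost endpoint of $N(f)$ is $(0, v_0(f))$ whenever the constant term of $f$ is nonzero. Since the constant term of $\prod f_i$ is $\prod a_{i,0}$, multiplicativity of the $p$-adic valuation gives $v_0\bigl(\prod f_i\bigr) = \sum v_0(f_i)$, which matches the starting point prescribed in the statement.

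For the slope data, I would invoke Proposition \ref{newtonroot}: the slope-length multiset of $N(f)$ is determined by, and determines, the multiset $\{(-v_p(\alpha), 1) : \alpha \text{ root of } f\}$ (with multiplicities). Since the roots of a product of polynomials are the disjoint union (with multiplicities) of the roots of the factors, the slope-length multiset of $N\bigl(\prod f_i\bigr)$ is the disjoint union of the slope-length multisets of the $N(f_i)$. This is exactly the recipe in the statement.

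Combining the two observations, both polygons begin at the same point, consist of segments with the same aggregated slope-length data, and must list those segments in increasing order of slope; hence they coincide. The main obstacle is essentially conceptual rather than computational: one must be careful that the ``rearranged concatenation'' really is a Newton polygon in the sense of a lower convex hull, but this is automatic because arranging segments in strictly increasing order of slope produces a convex chain, and Proposition \ref{newtonroot} guarantees this convex chain matches the one arising from the coefficient data of $\prod f_i$.
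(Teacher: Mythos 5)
Your argument is correct and is essentially the same as the paper's: both use the multiplicativity of the valuation to fix the starting point $(0,\sum v_0(f_i))$ and then invoke Proposition \ref{newtonroot} to identify the slope--length multiset of $N\bigl(\prod f_i\bigr)$ with the union of those of the factors, concluding by uniqueness of the polygon given its starting point and slope data.
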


There are some applications of this theory to polynomials with rational coefficients. Here we define the \emph{$p$-Newton polygon} of $f(z) \in \bQ[z]$, denoted by $N_p (f)$, as the Newton polygon of $f(z)$ with the embedding $\bQ \hookrightarrow \bC_p$. It is actually equivalent to the lower convex hull of the set of points as above, using the $p$-adic valuation in $\bQ$.

\begin{corollary}
\label{newtonfactor}
Suppose that $f(z)\in \bQ[z]$ has nonzero constant term. If the $p$-Newton polygon of $f(z)$ contains exactly $k$ lattice points, i.e., $N_p(f) \cap \bZ^2$ contains exactly $k$ points, then there are at most $k-1$ factors of $f$ over $\bQ$.
\begin{proof}
Suppose that $f(z)$ can be represented as a product of $k$ factors in $\bQ[z]$. We can consider this factorization in $\bC_p[z]$. Then by Proposition \ref{newtonprod} the $p$-Newton polygon of $f$ should be the rearranged concatenation of $p$-Newton polygons of those factors. However, since two endpoints of the $p$-Newton polygon of any polynomial in $\bQ[z]$ are always lattice points, so the rearranged concatenation has at least $k+1$ lattice points, contradiction.
\end{proof}
\end{corollary}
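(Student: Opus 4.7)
The plan is to establish the contrapositive: if $f \in \bQ[z]$ admits a factorization into $r$ factors of positive degree over $\bQ$, then $N_p(f)$ must contain at least $r+1$ lattice points. Once this is shown, the hypothesis that $N_p(f)$ has exactly $k$ lattice points forces $r + 1 \le k$, i.e., $r \le k-1$, yielding the corollary.

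First I would take such a factorization $f = f_1 f_2 \cdots f_r$ in $\bQ[z]$ and reinterpret it inside $\bC_p[z]$. Because $f$ has nonzero constant term, each $f_i$ also has nonzero constant term, so Proposition \ref{newtonprod} applies and realizes $N_p(f)$ as the rearranged concatenation of $N_p(f_1), \dots, N_p(f_r)$: one starts at height $\sum_i v_0(f_i)$ and strings together all their segments in order of increasing slope.

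Next I would observe that each $N_p(f_i)$ begins and ends at a lattice point, since its endpoints are $(0, v_0(f_i))$ and $(\deg f_i, v_{\deg f_i}(f_i))$, and the $p$-adic valuation of a nonzero rational number is an integer. Consequently, the rearranged concatenation passes through the points with $x$-coordinates $0,\ \deg f_1,\ \deg f_1 + \deg f_2,\ \dots,\ \deg f$, and each corresponding $y$-coordinate is a sum of integer valuations, hence an integer. These $r + 1$ points are distinct (strict monotonicity of $x$-coordinates, since each factor has positive degree), they all lie on $N_p(f)$, and they are all lattice points — exactly the conclusion needed.

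The only subtlety I anticipate is that these $r+1$ transition points need not all be vertices of $N_p(f)$: if two adjacent factor polygons contribute edges of equal slope, they merge into a single edge of $N_p(f)$ and the corresponding transition point becomes an interior lattice point of that merged edge rather than a vertex. However, the statement counts all lattice points on $N_p(f)$ (not just vertices), so this causes no loss; the point still contributes to the count. Once this bookkeeping is noted, the conclusion follows immediately from Proposition \ref{newtonprod}.
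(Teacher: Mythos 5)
Your overall strategy (the contrapositive, plus Proposition \ref{newtonprod} applied to the factorization viewed in $\bC_p[z]$) is the same as the paper's, but the specific bookkeeping step is false as stated. You claim that the rearranged concatenation passes through lattice points at the $x$-coordinates $0,\ \deg f_1,\ \deg f_1+\deg f_2,\ \dots,\ \deg f$. That is not how the rearranged concatenation works: all segments from all factors are sorted by slope \emph{globally}, so segments coming from different factors interleave, and the partial sums of the degrees are not the $x$-coordinates of junction points. Concretely, take $f_1=z^3+p$ and $f_2=z^2+p$: their $p$-Newton polygons are single segments of slopes $-1/3$ and $-1/2$, and $N_p(f_1f_2)$ has vertices $(0,2)$, $(2,1)$, $(5,0)$. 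At $x=\deg f_1=3$ the polygon has height $2/3$, not an integer, so the point you rely on is not a lattice point (the conclusion still holds here, with the three lattice points $(0,2),(2,1),(5,0)$, but not for the reason you give). Your closing paragraph, where you picture adjacent factor polygons attached block-by-block and merging only when neighboring slopes agree, reflects the same misreading of Proposition \ref{newtonprod}.

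The repair is short and is the mechanism the paper's (terse) proof is invoking. For any $g\in\bQ[z]$ with nonzero constant term, \emph{every vertex} of $N_p(g)$ is of the form $(i,v_i(g))$ with $v_i(g)\in\bZ$, so each edge of $N_p(g)$ has integer horizontal length and integer vertical displacement. Hence in the rearranged concatenation one starts at the lattice point $\bigl(0,\sum_i v_0(f_i)\bigr)$ and, after appending each sorted segment, again lands on a lattice point; every breakpoint (including the two ends) is therefore a lattice point of $N_p(f)$. Since each nonconstant factor contributes at least one segment of horizontal length $\ge 1$, there are at least $r$ segments and hence at least $r+1$ distinct lattice points on $N_p(f)$, which gives $r\le k-1$ as you intended.
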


\begin{corollary} [Eisenstein's criterion]
\label{eisenstein}
If the $p$-Newton polygon of $f(z) \in \bQ[z]$ is composed of only one line segment, with no lattice point except for the vertices, then $f(z)$ is irreducible over $\bQ$.
\end{corollary}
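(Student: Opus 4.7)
The plan is to deduce this immediately from Corollary \ref{newtonfactor} by setting $k = 2$. Under the hypothesis, $N_p(f)$ consists of a single line segment whose only lattice points are its two endpoints, so $N_p(f) \cap \bZ^2$ contains exactly $2$ points. Corollary \ref{newtonfactor} then gives that $f$ has at most $2 - 1 = 1$ factor over $\bQ$, which is precisely the statement that $f$ is irreducible. The nonvanishing of the constant term required to invoke Corollary \ref{newtonfactor} is automatic from the hypothesis, since a zero constant term would force the leftmost edge of $N_p(f)$ to have slope $-\infty$ rather than a single finite slope.

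Alternatively, one can give a self-contained argument by contradiction using Proposition \ref{newtonprod}: supposing $f = g h$ with $g, h \in \bQ[z]$ both nonconstant, $N_p(f)$ must be the rearranged concatenation of $N_p(g)$ and $N_p(h)$. Each of $N_p(g)$ and $N_p(h)$ has at least two lattice points at its endpoints, and the concatenation identifies at most one endpoint from each side, so $N_p(f)$ must contain at least three lattice points --- contradicting the hypothesis that its only lattice points are the two vertices of a single segment.

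No real obstacle is expected here: the statement is essentially the $k = 2$ specialization of the preceding corollary, and all of the combinatorial content has already been packaged into Proposition \ref{newtonprod} and Corollary \ref{newtonfactor}. The only item worth stating explicitly in the write-up is the observation above ruling out a zero constant term, so that Corollary \ref{newtonfactor} can be applied without hesitation.
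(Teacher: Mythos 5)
Your proof is correct and is the natural deduction the paper intends by leaving this corollary without an explicit proof: a single segment with no interior lattice points means $N_p(f) \cap \bZ^2$ has exactly two points, so Corollary~\ref{newtonfactor} with $k=2$ bounds the number of nonconstant factors by one. Your additional remark that the nonzero-constant-term hypothesis is automatic (since a vanishing constant term would give a slope $-\infty$ edge, contradicting the single finite segment) is a sound and worthwhile detail, and the self-contained alternative via Proposition~\ref{newtonprod} is essentially the same argument unrolled.
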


In this paper, we often execute delicate calculation on valuations of coefficients. For convenience, here we introduce a useful lemma that we will use afterward.

\begin{lemma}
\label{orderlemma}
Let $f(z)=a_n z^n + \cdots + a_0$ be a polynomial, and consider the $k$\textsuperscript{th} power $f(z)^k$. Let $(\alpha_1,\cdots,\alpha_k)$ be a $k$-tuple of integers satisfying $0\le \alpha_1 \le \cdots \le \alpha_k \le n$, and let $N(\alpha_1,\cdots,\alpha_k)$ be the number of permutations of the subscripts maintaining the ordering. That is, $N(\alpha_1,\cdots,\alpha_k)$ is the number of permutations $(r_1,\cdots,r_k)$ of $(1,\cdots,k)$ such that $\alpha_{r_1} \le \cdots\le \alpha_{r_k}$. Then
\begin{equation} \label{eq13} v_i(f(z)^k)\ge \min_{\substack{0 \le \alpha_1 \le \cdots \le \alpha_k \\ \alpha_1+\cdots+\alpha_k = i}}\left(\ord_p \left (\frac{k!}{N(\alpha_1,\cdots,\alpha_k)}\right)+\sum_{j=1}^k v_{\alpha_j}(f)\right).\end{equation}
\begin{proof}[Proof of lemma]
\renewcommand{\qedsymbol}{$\Diamond$}
In the expansion of $f(z)^k$, the coefficient of $z^i$ is the sum of terms of the form \[a_{\alpha_1} \cdots a_{\alpha_k},\] where $\alpha_1 + \cdots+\alpha_k = i$. Then for a fixed $k$-tuple $(\alpha_1,\cdots,\alpha_k)$ such that \[0\le\alpha_1 \le \cdots \le \alpha_k\ \ \text{and}\ \ \alpha_1+\cdots+\alpha_k = i,\] there are exactly \[\frac{k!}{N(\alpha_1,\cdots,\alpha_k)}\] terms which appears in the coefficient of $z^i$ in the expansion of $f(z)^k$ which are equal to $a_{\alpha_1} \cdots a_{\alpha_k}$. Therefore the coefficient of $z^i$ is \[\sum_{\substack{0\le \alpha_1\le \cdots\le \alpha_k \\ \alpha_1 + \cdots + \alpha_k=i}} \frac{k!}{N (\alpha_1,\cdots,\alpha_k)} a_{\alpha_1} \cdots a_{\alpha_k}.\] Now the properties of $p$-valuation give \eqref{eq13}.
\end{proof}
\end{lemma}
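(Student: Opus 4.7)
The plan is to expand $f(z)^{k}$ via the ordered multinomial expansion, repackage it as a sum over sorted tuples $(\alpha_1,\ldots,\alpha_k)$, and then apply the nonarchimedean inequality $\ord_p(x_1+\cdots+x_r) \ge \min_j \ord_p(x_j)$ termwise. First I would write
\[ c_i(f(z)^k) = \sum_{\substack{(\beta_1,\ldots,\beta_k) \in \{0,1,\ldots,n\}^k \\ \beta_1+\cdots+\beta_k=i}} a_{\beta_1} a_{\beta_2} \cdots a_{\beta_k}, \]
obtained simply by multiplying out $k$ copies of $f(z)$ and collecting the coefficient of $z^i$.

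Next, I would partition these ordered tuples according to their sorted rearrangement. For each sorted tuple $0 \le \alpha_1 \le \cdots \le \alpha_k$ with $\sum_j \alpha_j = i$, the number of ordered tuples with this sorted profile is the standard multinomial count $k!/(m_1! m_2! \cdots)$, where $m_1, m_2, \ldots$ denote the multiplicities of the distinct values appearing in $(\alpha_1, \ldots, \alpha_k)$. Unwinding the definition, $N(\alpha_1,\ldots,\alpha_k)$ equals the number of index permutations that fix the sorted order, which is exactly $m_1! m_2! \cdots$, so the count of orderings is $k!/N(\alpha_1,\ldots,\alpha_k)$. Combining, this yields
\[ c_i(f(z)^k) = \sum_{\substack{0 \le \alpha_1 \le \cdots \le \alpha_k \\ \alpha_1+\cdots+\alpha_k=i}} \frac{k!}{N(\alpha_1,\ldots,\alpha_k)} \, a_{\alpha_1} a_{\alpha_2} \cdots a_{\alpha_k}. \]

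Finally, taking $\ord_p$ and using both $\ord_p(x+y) \ge \min(\ord_p(x), \ord_p(y))$ and the additivity of $\ord_p$ on products immediately gives the claimed bound \eqref{eq13}. I do not anticipate a genuine obstacle: once the ordered-to-sorted combinatorial bookkeeping is in place, the $p$-adic step is automatic. The only point where a reader might want to pause is the identification $N(\alpha_1,\ldots,\alpha_k) = \prod_j m_j!$, but this is a direct unwinding of the definition of $N$ as the stabilizer size of the sorted tuple under permutation of indices.
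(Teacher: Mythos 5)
Your proof is correct and follows the same route as the paper: expand $f(z)^k$ over ordered tuples, regroup by sorted profile with multiplicity $k!/N(\alpha_1,\ldots,\alpha_k)$, and apply the nonarchimedean inequality. You supply slightly more detail than the paper by explicitly identifying $N$ with the product of factorials of multiplicities, but the argument is the same.
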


\section{Irreducibility of $G_1$}
\label{section:g1}

In this section, we prove Theorem \ref{mainthm} for $m=1$. From now on, unless it is specified otherwise, $d$ is an odd prime. First we note that a direct application of Corollary \ref{eisenstein} does not work; for example, from Proposition \ref{pgnga} we have
\begin{equation}
\label{eq2}
    G_1=\frac{1}{a-d} \left(d^d-\frac{a^d-d^d}{a-d}\right),
\end{equation}
and it turns out that the $d$-Newton polygon of $G_1$ is composed of a single line segment between $(0,d-2)$ and $(d-2,0)$, which contains many lattice points.

To apply the theory of Newton polygons, we first do a change of variable \[a \mapsto (b+1)d.\] That is, now we consider the family of maps \[\varphi_b ([x,y]) = [(b+1)dxy^{d-1},x^d+(d-1)y^d]\] where $b \ne -1$. Let \[\varphi_b^{n} ([x,y]) = [R_n(x,y),S_n(x,y)]\] and \[r_n = R_n(1,1),\ \ s_n = S_n(1,1) \in \bZ[b].\] Then as above we can construct the pre-Misiurewicz polynomial $\widetilde{\cG}_m$ and the Misiurewicz polynomial $\cG_m$ in $\bZ[b]$, satisfying the following equations which directly come from Proposition \ref{pgnga}. Note that we used caligraphic fonts to denote that those polynomials are obtained by a change of variable from $\widetilde{G}_m$ and $G_m$.

\begin{proposition}
\label{pgngb}
$\widetilde{\cG}_m$ and $\cG_m$ satisfy the following equations.
\[\widetilde{\cG}_m=(b+1)bd^2 s_{m-1}^{d-1} \cG_m,\] \[\cG_m= \frac{1}{bd} \left[(b+1)^d d^d s_{m-1}^{d(d-1)} - (bd+1) \frac{s_m^d-((b+1)d s_{m-1}^d)^d}{s_m-(b+1)d s_{m-1}^d}\right] \in \bZ[b].\]
\end{proposition}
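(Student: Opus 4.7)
The key observation is that $\varphi_b$ is literally the specialization of $\phi_a$ at $a=(b+1)d$. Comparing definitions, $\varphi_b([x,y])=\phi_{(b+1)d}([x,y])$, so iterating both maps yields polynomials related by $R_n(x,y)=P_n(x,y)|_{a=(b+1)d}$ and $S_n(x,y)=Q_n(x,y)|_{a=(b+1)d}$. Evaluating at $(1,1)$ gives $r_n=p_n|_{a=(b+1)d}$ and $s_n=q_n|_{a=(b+1)d}$ as polynomials in $b$. Since the construction of $\widetilde{\cG}_m$ and $\cG_m$ from $r_n,s_n$ is identical to the construction of $\widetilde{G}_m$ and $G_m$ from $p_n,q_n$, one immediately obtains the polynomial identities $\widetilde{\cG}_m(b)=\widetilde{G}_m\bigl((b+1)d\bigr)$ and $\cG_m(b)=G_m\bigl((b+1)d\bigr)$.

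The proof therefore reduces to substituting $a=(b+1)d$ into the formulas of Proposition \ref{pgnga}. Under this substitution one has $a-d\mapsto bd$, $a-d+1\mapsto bd+1$, $a^d\mapsto (b+1)^dd^d$, and each $q_k$ becomes $s_k$. Plugging into $\widetilde{G}_m=a(a-d)q_{m-1}^{d-1}G_m$ gives
\[
\widetilde{\cG}_m=(b+1)d\cdot bd\cdot s_{m-1}^{d-1}\cdot \cG_m=(b+1)bd^2\, s_{m-1}^{d-1}\, \cG_m,
\]
and substituting into the displayed formula for $G_m$ yields the asserted formula for $\cG_m$.

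The remaining point is integrality, i.e., $\cG_m\in\bZ[b]$, but this is automatic: Proposition \ref{pgnga} already ensures $G_m\in\bZ[a]$, and since $a=(b+1)d\in\bZ[b]$, the composition lies in $\bZ[b]$. There is no genuine obstacle in this proposition, as the entire argument amounts to unwinding the definitions. The proposition is really a bookkeeping device, recording the change of variable $a\mapsto(b+1)d$ that shifts the origin of the parameter to $a=d$, precisely the value at which every finite critical point of $\phi_a$ is fixed (cf.\ the earlier computation of $\widetilde{G}_{0,1}$). This normalization is what allows the Newton-polygon arguments of the subsequent sections to be carried out cleanly at the prime $d$.
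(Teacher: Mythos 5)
Your proof is correct and matches the paper's (implicit) argument exactly: the paper simply asserts that these equations ``directly come from Proposition~\ref{pgnga},'' which is precisely your substitution $a=(b+1)d$. The observations that $a-d\mapsto bd$, $a-d+1\mapsto bd+1$, $q_k\mapsto s_k$, and that integrality over $\bZ[b]$ is inherited from $G_m\in\bZ[a]$, are the whole content.
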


The irreducibility of $G_m$ over $\bQ$ is the same as that of $\cG_m$, since the change of variable is linear. From now on, we prove the irreducibility of $\cG_m$.

\begin{proposition}
\label{g1}
The first Misiurewicz polynomial $\cG_1$ is irreducible over $\bQ$.
\begin{proof}
From \eqref{eq2}, we have \begin{align*} \cG_1&=\frac{1}{bd} \left(d^d-\frac{(b+1)^d d^d-d^d}{bd}\right) \\&= -\frac{1}{bd} \left(\left(\sum_{i=0}^{d-1} \binom{d}{i+1} b^i d^{d-1} \right)-d^d \right) \\ &= - \sum_{i=0}^{d-2} \binom{d}{i+2} b^{i} d^{d-2}.\end{align*} Since \[\ord_d \left(\binom{d}{i+2}\right) = \begin{cases} 1 & \text{if } i=0,\cdots,d-3, \\ 0 & \text{if }i=d-2, \end{cases}\] we have \[v_{i}(\cG_1) =  \begin{cases} d-1 & \text{if } i=0,\cdots,d-3, \\ d-2 & \text{if }i=d-2. \end{cases}\] It follows that $N_d(\cG_1)$ is composed of a single line segment between $(0,d-1)$ and $(d-2,d-2)$ which has no other lattice point except for the vertices. Therefore, by Eisenstein's criterion $\cG_1$ is irreducible over $\bQ$.
\end{proof}
\end{proposition}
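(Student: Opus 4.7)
The plan is to apply Eisenstein's criterion at the prime $d$ itself via its Newton-polygon formulation (Corollary \ref{eisenstein}). The change of variable $a \mapsto (b+1)d$ from Section \ref{section:newton} was introduced precisely to concentrate the $d$-adic content of $G_1$ in a controlled way, so I would expect the $d$-Newton polygon of $\cG_1$ to consist of essentially a single short segment.

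First I would specialize Proposition \ref{pgngb} at $m=1$. Since $s_0=1$, the formula collapses and one recovers \eqref{eq2} under the substitution $a=(b+1)d$, namely
$$\cG_1 = \frac{1}{bd}\left(d^d - \frac{((b+1)d)^d - d^d}{bd}\right).$$
The key algebraic step is a binomial expansion: using $((b+1)d)^d - d^d = d^d\sum_{k=1}^d \binom{d}{k} b^k$ and dividing by $bd$, the $k=1$ term $\binom{d}{1} d^{d-1} = d^d$ exactly cancels the outer $d^d$. After shifting indices I would obtain the closed form
$$\cG_1 \;=\; -\,d^{d-2}\sum_{i=0}^{d-2}\binom{d}{i+2} b^i,$$
which is now a manifest polynomial in $\bZ[b]$ of degree $d-2$ with leading coefficient $-d^{d-2}$.

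Next I would read off the $d$-adic valuations. Since $d$ is prime, $v_d\!\left(\binom{d}{j}\right)=1$ for $1\le j \le d-1$ and $v_d\!\left(\binom{d}{d}\right)=0$. Combined with the factor $d^{d-2}$, this gives $v_i(\cG_1) = d-1$ for $0 \le i \le d-3$ and $v_{d-2}(\cG_1) = d-2$. Hence $N_d(\cG_1)$ is the single line segment from $(0,d-1)$ to $(d-2,d-2)$, of slope $-1/(d-2)$. Because $\gcd(1,d-2)=1$, this segment carries no lattice points other than its endpoints, and Corollary \ref{eisenstein} finishes the proof.

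I do not anticipate a real obstacle here: the whole argument is the observation that the substitution was designed to Eisensteinize $G_1$ at $d$. The only step requiring a small amount of care is the cancellation of the constant terms, which pins down that $\cG_1$ has degree exactly $d-2$ and leading coefficient $-d^{d-2}$; the remaining valuation bookkeeping is immediate from $d$ being prime. This sets the template I would expect to generalize (with considerably more work) to the cases $m=2,3$ in the following sections.
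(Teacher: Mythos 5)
Your proof is correct and follows essentially the same route as the paper: expand via the binomial theorem to obtain $\cG_1 = -d^{d-2}\sum_{i=0}^{d-2}\binom{d}{i+2}b^i$, read off the $d$-adic valuations, and apply the Newton-polygon form of Eisenstein's criterion at $d$. The only (harmless) addition is the explicit $\gcd(1,d-2)=1$ remark justifying that the segment from $(0,d-1)$ to $(d-2,d-2)$ has no interior lattice points, which the paper states without comment.
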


\section{Irreducibility of $G_2$}
\label{section:g2}

In this section, we prove Theorem \ref{mainthm} for $m=2$. From Proposition \ref{pgngb} we have \begin{equation} \label{eq3} bd\cG_2 =(b+1)^d d^d s_1^{d(d-1)}-(bd+1)\frac{s_2^d - ((b+1)ds_1^d)^d}{s_2-(b+1)ds_1^d}.\end{equation} If we let \[\sigma = (b+1)ds_1^d,\ \ \tau=s_2-\sigma = s_2-(b+1)ds_1^d,\] then \[(b+1)^d d^d s_1^{d(d-1)} = (b+1)d \sigma^{d-1}\] and \[\frac{s_2^d - ((b+1)ds_1^d)^d}{s_2-(b+1)ds_1^d} = \frac{(\sigma+\tau)^d-\sigma^d}{\tau} = \sum_{k=0}^{d-1} \binom{d}{k} \sigma^{k} \tau^{d-1-k} .\] Therefore, \begin{equation} \label{eq7} \begin{split} bd \cG_2  &= (b+1)d \sigma^{d-1} - (bd+1) \sum_{k=0}^{d-1} \binom{d}{k} \sigma^k \tau^{d-1-k} \\ & = -bd(d-1) \sigma^{d-1} - (bd+1) \sum_{k=0}^{d-2} \binom{d}{k} \sigma^k \tau^{d-1-k}. \end{split}\end{equation}

\begin{proposition}
\label{g2}
The second Misiurewicz polynomial $\cG_2$ is irreducible over $\bQ$.
\begin{proof}
We first investigate $N_d(\sigma)$ and $N_d(\tau)$. Since \[s_1=d,\ \ s_2 = ((b+1)d)^d+(d-1)d^d=((b+1)^d+d-1)d^d,\] we have \begin{equation} \label{eq8} \sigma=(b+1)d^{d+1},\ \ \tau=d^d \sum_{i=2}^d \binom{d}{i} b^i.\end{equation} Therefore $N_d(\sigma)$ is defined by two vertices $(0,d+1)$ and $(1,d+1)$, while $N_d(\tau)$ is defined by three vertices $(0,\infty)$, $(2,d+1)$, and $(d,d)$.

Now we investigate $N_d(bd \cG_2)$ with \eqref{eq7} and the $d$-Newton polygons of $\sigma$ and $\tau$ from above. Explicitly, we claim the following:
\begin{enumerate}[(i)]
    \item $bd\cG_2$ is divisible by $b$, and $v_1(bd\cG_2)=d^2$.
    \item The degree of $bd\cG_2$ is $d^2-d+1$, and \[v_{d^2-d}(bd\cG_2) = d^2-d,\ \ v_{d^2-d+1}(bd\cG_2)=d^2-d+1.\]
    \item Let $\ell$ be the line in the $xy$-plane passing through the two points $(1,d^2)$ and $(d^2-d,d^2-d)$. Then $(i,v_i(bd\cG_2))$ is on or above the line $\ell$ for all $i=1,2,\cdots,d^2-d$.
\end{enumerate}

For (i), we observe that $\tau$ is divisible by $b^2$. Therefore, \begin{align*} bd\cG_2 & \equiv -bd(d-1)\sigma^{d-1}  \\& = -bd(d-1)(b+1)^{d-1}d^{d^2-1} \equiv -bd^{d^2} (d-1) \pmod{b^2}.\end{align*} This proves (i).

For (ii), we observe that $\deg (\sigma)=1$ while $\deg(\tau)=d$. Therefore, \[\deg(bd(d-1)\sigma^{d-1} )=d\] and \[\deg \left((bd+1)\binom{d}{k} \sigma^k \tau^{d-1-k}\right) = 1+k+d(d-1-k)=d^2-d+1 - (d-1)k\] for each $k=0,\cdots,d-2$. With \eqref{eq7}, this implies that $\deg (bd\cG_2)=d^2-d+1$, and moreover terms with degree $d^2-d+1$ and $d^2-d$ come from the $k=0$ case only. It gives \begin{align*} -(bd+1)\tau^{d-1} & = -(bd+1) d^{d^2-d} (b^d+db^{d-1}+\cdots)^{d-1} \\ &= -(bd+1) d^{d^2-d} (b^{d^2-d}+(d^2-d) b^{d^2-d-1}+\cdots) \\ &= -d^{d^2-d} (db^{d^2-d+1} + (d^3-d^2+1)b^{d^2-d}+\cdots), \end{align*}  and (ii) directly follows.

To prove (iii), we investigate the $d$-Newton polygon of each term in \eqref{eq7}. First, the $d$-Newton polygon of \[bd(d-1) \sigma^{d-1}\] is defined by three vertices $(0,\infty)$, $(1,d^2)$, and $(d-1,d^2)$, and all vertices are above $\ell$. On the other hand, if $k\ne 0$, then the $d$-Newton polygon of \[(bd+1) \binom{d}{k} \sigma^k \tau^{d-1-k}\] is defined by five vertices \[(0,\infty),\ \ (2(d-1-k),d^2),\ \ (d(d-1-k),d^2-(d-1-k)),\] \[(d(d-1-k)+k,d^2-(d-1-k)),\ \ (d(d-1-k)+k+1,d^2-(d-1-k)+1).\] Note that $\binom{d}{k}$ has $d$-adic valuation $1$ for $k=1,\cdots,d-2$, which increases the total $d$-adic valuation by $1$. All of these vertices are on or above $\ell$. Finally, even if $k=0$, the $d$-Newton polygon of \[(bd+1) \tau^{d-1}\] is defined by four vertices \[(0,\infty),\ \ (2(d-1),d^2-1),\ \ (d^2-d,d^2-d),\ \ (d^2-d+1,d^2-d+1),\] and all of these vertices are on or above $\ell$. Therefore, since the $d$-adic valuation satisfies the non-Archimedean triangle inequality, $N_d(bd\cG_2)$ is on or above $\ell$ as well. This proves (iii).

Now (i), (ii), and (iii) says that $N_d(bd\cG_2)$ is defined by four vertices \[(0,\infty),\ \ (1,d^2),\ \ (d^2-d,d^2-d),\ \ (d^2-d+1,d^2-d+1),\] or equivalently $N_d(\cG_2)$ is defined by three vertices \[(0,d^2-1),\ \ (d^2-d-1,d^2-d-1),\ \ (d^2-d,d^2-d).\]

Since the line segment between $(0,d^2-1)$ and $(d^2-d-1,d^2-d-1)$ contains no lattice point except for the endpoints, Corollary \ref{newtonfactor} says that $\cG_2$ has at most two factors over $\bQ$, so also over $\bZ$. Moreover, in the proof of Corollary \ref{newtonfactor}, if $\cG_2$ has two factors then one should be associated to the line segment between $(d^2-d-1,d^2-d-1)$ and $(d^2-d,d^2-d)$. This factor should be linear, whose $d$-Newton polygon is composed of a single line segment of slope $1$. Since we calculated above that \[bd\cG_2=-d^{d^2}(d-1)b-\cdots-d^{d^2-d+1}b^{d^2-d+1}\] so \[\cG_2 = -d^{d^2-1}(d-1) - \cdots - d^{d^2-d} b^{d^2-d}=-d^{d^2-d-1} (-d^d (d-1) - \cdots - db^{d^2-d}).\] This means that the only possible factors are \[(\text{a divisor of }(d-1))\pm db.\]

We claim any such linear polynomial cannot be a factor of $\cG_2$, so $\cG_2$ is indeed irreducible over $\bQ$. It is equivalent to show that any $b=e/d$, where $e$ is a (positive or negative) divisor of $d-1$, cannot be a root of $\cG_2$. For the sake of contradiction, suppose that there is a root $b=e/d$ such that $\cG_2(b)=0$. For such $b$, \eqref{eq7} says that \begin{equation*} bd(d-1)\sigma^{d-1} = -(bd+1) \sum_{k=0}^{d-2} \binom{d}{k} \sigma^k \tau^{d-1-k}\end{equation*} so \begin{equation} \label{eq9} e(d-1) \sigma^{d-1} = -(e+1) \sum_{k=0}^{d-2} \binom{d}{k} \sigma^k \tau^{d-1-k}. \end{equation} From \eqref{eq8}, we have \[\ord_d(\sigma) = \ord_d ((d+e)d^d) =d,\] so the left hand side of \eqref{eq9} has a $d$-adic valuation of $d(d-1)$.

On the other hand, also from \eqref{eq8} we have \begin{align*} \ord_d (\tau) & = \ord_d \left( d^d \left(b^d+db^{d-1}+\cdots+ \binom{d}{2} b^2 \right)\right) \\&= \ord_d \left(e^d+d^2 e^{d-1}+\cdots + d^{d-2} \binom{d}{2} e^2 \right) = 0.\end{align*} Therefore, if $0<k<d$ then \[\ord_d \left( \binom{d}{k} \sigma^k \tau^{d-1-k} \right) = 1+dk,\] while \[\ord_d \left(\binom{d}{0} \sigma^0 \tau^{d-1} \right) = \ord_d (\tau^{d-1})=0.\] This implies \[\ord_d \left(\sum_{k=0}^{d-2} \binom{d}{k} \sigma^k \tau^{d-1-k}\right) = 0.\] Since $e$ is a divisor of $d-1$, $\ord_d(e+1)\le 1$ unless $e=-1$. Hence the $d$-adic valuation of the right hand side of \eqref{eq9} must be either $\infty$ or at most $1$. Therefore \eqref{eq9} cannot be true, contradiction. This ends the proof.
\end{proof}
\end{proposition}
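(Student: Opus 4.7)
The plan is to extend the Newton polygon strategy used for $\cG_1$ to $\cG_2$, whose $d$-Newton polygon will no longer consist of a single segment. Starting from Proposition \ref{pgngb}, I introduce the shorthands $\sigma=(b+1)ds_1^d$ and $\tau=s_2-\sigma$, and use the factorization $(\sigma+\tau)^d-\sigma^d=\tau\sum_{k=0}^{d-1}\binom{d}{k}\sigma^k\tau^{d-1-k}$ together with cancellation of the $k=d-1$ summand against $(b+1)d\sigma^{d-1}$ to rewrite
\[
bd\cG_2 \;=\; -bd(d-1)\sigma^{d-1}\;-\;(bd+1)\sum_{k=0}^{d-2}\binom{d}{k}\sigma^k\tau^{d-1-k}.
\]
From $s_1=d$ and $s_2=((b+1)^d+d-1)d^d$ I get the clean expressions $\sigma=(b+1)d^{d+1}$ and $\tau=d^d\sum_{i=2}^{d}\binom{d}{i}b^i$, whose $d$-Newton polygons are immediate: $N_d(\sigma)$ is a single horizontal segment at height $d+1$, while $N_d(\tau)$ has vertices $(2,d+1)$ and $(d,d)$ with constant and linear terms zero.

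I would then pin down $N_d(\cG_2)$ in three stages. Since $\tau$ is divisible by $b^2$, the entire sum in the display above is divisible by $b^2$, so modulo $b^2$ one has $bd\cG_2\equiv -bd(d-1)\sigma^{d-1}$. This pins down the $b$-coefficient of $bd\cG_2$ and yields $v_0(\cG_2)=d^2-1$. At the top, the $k=0$ summand $-(bd+1)\tau^{d-1}$ dominates in degree and produces the two highest-order coefficients of $bd\cG_2$, giving the two right vertices $(d^2-d-1,d^2-d-1)$ and $(d^2-d,d^2-d)$ of $N_d(\cG_2)$. For the middle, I use Proposition \ref{newtonprod} summand by summand together with the ultrametric inequality to verify that every intermediate coefficient valuation sits on or above the line through $(0,d^2-1)$ and $(d^2-d-1,d^2-d-1)$.

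Since $\gcd(d,d^2-d-1)=1$, the long lower-left segment of slope $-d/(d^2-d-1)$ contains no interior lattice point, so $N_d(\cG_2)$ has exactly three lattice points. Corollary \ref{newtonfactor} then bounds the number of $\bQ$-factors of $\cG_2$ by two, and any second factor must correspond to the short top segment of slope $1$, hence be linear of the shape (divisor of $d-1$)$\pm db$. To rule out such a linear factor I apply the rational root test: if $b=e/d$ with $e\mid (d-1)$ were a root, then clearing denominators in $\cG_2(b)=0$ gives $e(d-1)\sigma^{d-1}=-(e+1)\sum_{k=0}^{d-2}\binom{d}{k}\sigma^k\tau^{d-1-k}$, and since $\ord_d(\sigma)=d$ while $\ord_d(\tau)=0$ whenever $b=e/d$ with $e$ coprime to $d$, the left side has $d$-adic valuation $d(d-1)$ whereas the right side has valuation at most $\ord_d(e+1)\le 1$, a contradiction (with the boundary case $e=-1$ handled separately).

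The main obstacle will be the valuation bookkeeping in the middle stage: the mixed terms $\sigma^k\tau^{d-1-k}$ with $1\le k\le d-2$ involve high powers of the two-segment polynomial $\tau$, so I would apply Lemma \ref{orderlemma} to $\tau^{d-1-k}$ to obtain uniform lower bounds on intermediate coefficient valuations, and then exploit the extra $+1$ from $\ord_d\binom{d}{k}$ (for $1\le k\le d-2$) as the crucial slack that keeps every coefficient of $bd\cG_2$ above the target line except at the endpoints of the long lower-left segment.
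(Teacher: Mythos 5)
Your proposal follows the paper's proof essentially step for step: the same substitutions $\sigma,\tau$ and rewriting of $bd\cG_2$, the same explicit Newton polygons of $\sigma$ and $\tau$, the same three-stage determination of $N_d(\cG_2)$ (bottom via $\tau\equiv 0\pmod{b^2}$, top from the $k=0$ summand, middle via non-Archimedean bounds term by term), the same lattice-point count forcing at most two factors with the second one linear of the form (divisor of $d-1$)$\pm db$, and the same $d$-adic valuation contradiction at $b=e/d$. The only cosmetic difference is that you float Lemma~\ref{orderlemma} as a fallback for the middle bookkeeping, whereas the paper gets exact Newton polygons of each $\sigma^k\tau^{d-1-k}$ directly from Proposition~\ref{newtonprod}; both routes close the same gap.
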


\section{Irreducibility of $G_3$}
\label{section:g3}

Now we prove that $\cG_3$ is irreducible over $\bQ$, which directly implies the irreducibility of $G_3$ as well. That will finish the proof of Theorem \ref{mainthm}. As in the previous section, Proposition \ref{pgngb} gives \begin{equation} \label{eq10}\cG_3=\frac{1}{bd} \left[(b+1)^d d^d s_2^{d(d-1)}-(bd+1)\frac{s_3^d - ((b+1)ds_2^d)^d}{s_3-(b+1)ds_2^d}\right].\end{equation} Similarly, we let \[\sigma = (b+1)d s_2^d,\ \ \tau = s_3-\sigma= s_3 - (b+1)d s_2^d.\] Then as in \eqref{eq7}, we have \begin{equation} \label{eq16} bd\cG_3 = -bd(d-1)\sigma^{d-1} - (bd+1) \sum_{k=0}^{d-2} \binom{d}{k} \sigma^k \tau^{d-1-k}.\end{equation}

Before we prove the irreducibility of $\cG_3$, we investigate $N_d(\sigma)$ and $N_d(\tau)$. First, \[\sigma = (b+1)d s_2^d = (b+1)dF,\] where \[F=s_2^d=((b+1)^d+d-1)^d d^{d^2}.\] It follows that $N_d(\sigma)$ is defined by three vertices $(0,d^2+d+1)$, $(d^2,d^2+1)$, and $(d^2+1,d^2+1)$.

$\tau$ is more complicated. We can first observe that \[r_2 =(b+1)^2 d^{d+1},\] so \begin{equation} \label{eq12} \tau = s_3-\sigma = r_2^d+(d-1)s_2^d -(b+1)d s_2^d =(b+1)^{2d} d^{d^2+d}-(bd+1)F.\end{equation}

\begin{proposition}
\label{taunp}
The $d$-Newton polygon of $\tau$ is defined by five vertices \[(0,\infty),\ \ (3,d^2+d+1),\ \ (d+1,d^2+d),\ \ (d^2,d^2),\ \ (d^2+1,d^2+1).\]
\begin{proof}
We first directly calculate the terms in $\tau$ with degree at most $3$ to show that \[\tau = -\frac{(d-1)^2}{2}d^{d^2+d+1}b^3+(\text{higher terms}),\] which gives the first two vertices. Also, since $\tau$ is equal to $-(bd+1)F$ for the terms with degree at least $2d+1$, the last two vertices follow from $N_d((bd+1)F)$.

For the remaining vertex $(d+1,d^2+d)$, we use Lemma \ref{orderlemma} from Section \ref{section:newton}. In particular, it turns out that if $k=d$ is prime in the lemma and $d \nmid i$ then \[v_i(f^d)\ge 1+\min_{\substack{0\le \alpha_1 \le \cdots \le \alpha_d \\ \alpha_1 + \cdots + \alpha_d = i}} \left(\sum v_{\alpha_j} (f)\right).\] If $d\mid i$, then letting $i=de$, due to the possibility of $\alpha_1 = \cdots = \alpha_k$ we have \[v_{de}(f^d)\ge \min\left[ 1+\min_{\substack{0\le \alpha_1 \le \cdots \le \alpha_d \\ \alpha_1 + \cdots + \alpha_d = de}} \left(\sum v_{\alpha_j} (f)\right),d v_{e}(f) \right].\]

Applying this to $f=(b+1)^d+d-1$, where $v_i(f)=0$ for $i=d$ and $v_i(f)=1$ for all $i<d$, we can conclude that \[v_i (f^d) \begin{cases} \ge d+1-\left\lfloor \frac{i}{d}\right\rfloor & \text{if } d \nmid i, \\ \ge \min(d+1-e,d) & \text{if } i=de,\ e<d, \\ =0 & \text{if } i=d^2. \end{cases}\] Note that \[d \ge d+1-e = d+1-\left \lfloor \frac{i}{d}\right \rfloor\] for all $i=de$ where $0<e<d$. Moreover, since the constant term of $f$ is $d$, \[v_0 (f^d) = \ord_d(d^d)= d.\] Therefore, we can simplify the above inequality to \begin{equation} \label{eq11} v_i(f^d) \begin{cases} =0 & \text{if }i=d^2, \\ =d & \text{if } i=0, \\ \ge d+1-\left \lfloor \frac{i}{d} \right \rfloor & \text{otherwise.} \end{cases}\end{equation}

Now we claim the following:

\begin{enumerate}[(i)]
    \item $v_i(\tau) \ge d^2+d+1$ for $i=4,\cdots,d$
    \item $v_{d+1}(\tau) = d^2+d$.
    \item Let $\ell$ be the line in the $xy$-plane passing through the two points $(d+1,d^2+d)$ and $(d^2,d^2)$. Then $(i,v_i(\tau))$ is on or above the line $\ell$ for all $i=d+2,\cdots,d^2-1$.
\end{enumerate}

For (i), when $i=4,\cdots,d-1$ we have \[v_i(f^d)\ge d+1,\] so \begin{align*} v_i ((bd+1)F) & \ge \min(v_{i-1}(F)+1,v_i(F)) \\ &= \min (v_{i-1} (f^d)+d^2+1,v_i(f^d)+d^2) ) \ge d^2+d+1.\end{align*} On the other hand, \[v_i((b+1)^{2d} d^{d^2+d}) = \ord_d \left(\binom{2d}{i}\right) + (d^2+d) = d^2+d+1\] so from \eqref{eq12} we have \[v_i(\tau)\ge d^2+d+1.\]

When $i=d$, the coefficient of $b^d$ in \eqref{eq12} is \begin{equation} \label{eq14} d^{d^2+d}\binom{2d}{d}  - (d c_{d-1} (F) + c_d(F))=d^{d^2+d}\binom{2d}{d}  - d^{d^2+1} c_{d-1} (f^d) -d^{d^2} c_d(f^d).\end{equation} We have shown above that \[\ord_d(c_{d-1}(f^d)) = v_{d-1} (f^d) \ge d+1,\] so the second term has $d$-adic valuation at least $d^2+d+2$. For $c_d(f^d)$, considering \eqref{eq13} in Lemma \ref{orderlemma}, the least possible $d$-valuation $d$ appears only when $\alpha_1 = \cdots = \alpha_d = 1$ or $\alpha_1 = \cdots =\alpha_{d-1} =0$ and $\alpha_d=d$. In other words, \begin{align*} c_d (f^d) &= \underbrace{d^d}_{\alpha_1 = \cdots = \alpha_d = 1} + \underbrace{d \cdot d^{d-1}}_{\alpha_1 = \cdots = \alpha_{d-1}=0,\ \ \alpha_d = d} + (\text{divisible by }d^{d+1})\\ &= 2d^d + (\text{divisible by }d^{d+1}).\end{align*} Therefore, from \eqref{eq14}, \[c_d(\tau) = d^{d^2+d} \left(\binom{2d}{d} - 2\right) + (\text{divisible by }d^{d^2+d+1}).\] In fact, $c_d(\tau)$ must be divisible by $d^{d^2+d+1}$ since $\binom{2d}{d} \equiv 2 \pmod d$. This proves that $v_d (\tau) \ge d^2+d+1$ as well.

For (ii), the coefficient of $b^{d+1}$ in \eqref{eq12} is \begin{equation}\begin{aligned} \label{eq15} d^{d^2+d} \binom{2d}{d+1} & - (dc_d(F)+c_{d+1}(F)) \\& = d^{d^2+d} \binom{2d}{d+1} - d^{d^2+1} c_d(f^d) - d^{d^2} c_{d+1} (f^d),\end{aligned}\end{equation} and it turns out that the first two terms have $d$-adic valuation $d^2+d+1$. On the other hand, in $c_{d+1}(f^d)$ the least possible $d$-valuation $d$ appears only when $\alpha_1 = \cdots = \alpha_{d-2}=0$, $\alpha_{d-1}=1$, and $\alpha_d = d$ in \eqref{eq13}, so \[c_{d+1} (f^d)= d(d-1) d^{d-1} + (\text{divisible by }d^{d+1})=(d-1)d^d + (\text{divisible by }d^{d+1})\] has $d$-adic valuation exactly $d$. This implies that the right hand side of \eqref{eq15} has $d$-adic valuation exactly $d^2+d$, which implies (ii).

For (iii), we observe that \[v_i((b+1)^{2d}d^{d^2+d}) \ge d^2+d\] so it suffices to prove the same statement for $(bd+1)F$ instead of $\tau$. However, using \eqref{eq11} we have \begin{align*} v_i((bd+1)F) &\ge \min(v_{i-1}(F)+1,v_i(F)) \\ & = \min(v_{i-1}(f^d)+d^2+1,v_i(f^d)+d^2) \ge d^2+d+1 - \left \lfloor \frac{i}{d}\right \rfloor \end{align*} for all $i=d+2,\cdots,d^2-1$, and for such $i$, $(i,d^2+d+1-\left \lfloor \frac{i}{d}\right \rfloor)$ is on or above $\ell$. This proves (iii).

Now (i), (ii), and (iii) says $(d+1,d^2+d)$ is the only other vertex in $N_d(\tau)$.
\end{proof}
\end{proposition}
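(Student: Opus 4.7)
The plan is to determine $v_i(\tau)$ for every $0 \le i \le d^2+1$ using the explicit formula $\tau = (b+1)^{2d} d^{d^2+d} - (bd+1) F$, where $F = d^{d^2} f^d$ and $f = (b+1)^d + d - 1$, and then read off the vertices. The analysis splits naturally into three regimes: low-degree ($i \le 3$), top-degree ($i \in \{d^2, d^2+1\}$), and intermediate ($4 \le i \le d^2-1$). The central tool for the intermediate regime is Lemma~\ref{orderlemma} applied to $f^d$: since $v_\alpha(f) = 1$ for $0 \le \alpha < d$ and $v_d(f) = 0$, the lemma yields a general bound of the shape $v_i(f^d) \ge d+1 - \lfloor i/d \rfloor$, with exact value $0$ only at the top coefficient $i = d^2$.

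For the low-degree vertex $(3, d^2+d+1)$, I would expand $f^d$ and $(b+1)^{2d}$ through degree $3$ via the binomial/multinomial theorem and check directly that the $b^0$, $b^1$, $b^2$ coefficients of $\tau$ vanish identically as polynomials in $d$, while the $b^3$ coefficient has valuation exactly $d^2+d+1$. For the top vertices, the observation that $(b+1)^{2d} d^{d^2+d}$ has degree $2d < d^2$ for $d \ge 3$ reduces the computation to the top two coefficients of $(bd+1) F$, which are read off from the leading terms of $f^d$ to give $v_{d^2}(\tau) = d^2$ and $v_{d^2+1}(\tau) = d^2+1$.

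The intermediate regime splits at the suspected middle vertex $(d+1, d^2+d)$. For $4 \le i \le d$, I would argue that both $v_i\bigl((b+1)^{2d} d^{d^2+d}\bigr)$ and $v_i\bigl((bd+1)F\bigr)$ are at least $d^2+d+1$: the first using $v_d(\binom{2d}{i}) \ge 1$ when $d \nmid i$, and the second from the bound on $f^d$ supplied by Lemma~\ref{orderlemma}. For $d+2 \le i \le d^2-1$, the estimate $v_i((bd+1)F) \ge d^2+d+1 - \lfloor i/d \rfloor$ places $(i, v_i(\tau))$ on or above the segment from $(d+1, d^2+d)$ to $(d^2, d^2)$, as required.

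The hard part will be the exact computations at $i = d$ and $i = d+1$, where several contributions share the critical valuation and one must confirm they combine correctly. Tracing Lemma~\ref{orderlemma} at $i = d$, the tuples $(1,\ldots,1)$ and $(0,\ldots,0,d)$ each contribute $d^d$ to $c_d(f^d)$, so $c_d(f^d) \equiv 2 d^d \pmod{d^{d+1}}$; matching this against the $(b+1)^{2d} d^{d^2+d}$ piece reduces the claim $v_d(\tau) \ge d^2+d+1$ to the Lucas-style congruence $\binom{2d}{d} \equiv 2 \pmod d$. At $i = d+1$, the minimum in Lemma~\ref{orderlemma} is attained uniquely by the tuple $(0,\ldots,0,1,d)$, yielding a dominant contribution $(d-1) d^d$ to $c_{d+1}(f^d)$, which survives the subtraction to give $v_{d+1}(\tau) = d^2+d$ exactly and thereby pin down the middle vertex.
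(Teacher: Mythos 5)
Your proposal follows essentially the same route as the paper's proof: the same decomposition $\tau = (b+1)^{2d}d^{d^2+d} - (bd+1)F$ with $F = d^{d^2} f^d$, the same application of Lemma~\ref{orderlemma} to derive $v_i(f^d) \ge d+1 - \lfloor i/d \rfloor$, the same direct expansion through degree $3$, the same degree argument for the top two vertices, and the same explicit tuple analysis at $i=d$ (where $(1,\ldots,1)$ and $(0,\ldots,0,d)$ give $c_d(f^d) \equiv 2d^d$ and cancellation against $\binom{2d}{d}\equiv 2 \pmod d$ is needed) and at $i=d+1$ (where $(0,\ldots,0,1,d)$ uniquely achieves the minimum). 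One minor imprecision to fix when writing this up: the blanket claim that both pieces have valuation $\ge d^2+d+1$ for $4\le i\le d$ fails at $i=d$ itself, where each piece has valuation exactly $d^2+d$; your later paragraph acknowledges this and supplies the cancellation argument, but the earlier sentence should be restricted to $4\le i\le d-1$ as the paper does.
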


Now we are ready to prove our main result.

\begin{proof}[Proof of Theorem \ref{mainthm}]
As in Proposition \ref{g2}, we investigate $N_d(bd\cG_3)$ with \eqref{eq16} and the $d$-Newton polygons of $\sigma$ and $\tau$. Explicitly, we claim the followings:
\begin{enumerate}[(i)]
    \item $bd\cG_3$ is divisible by $b$, and $v_1(bd\cG_3)=d^3$.
    \item The degree of $bd\cG_3$ is $d^3-d^2$, and \[v_{d^3-d^2}(bd\cG_3) = d^3-d^2.\]
    \item Let $\ell$ be the line in the $xy$-plane passing through the two points $(1,d^3)$ and $(d^3-d^2,d^3-d^2)$. Then $(i,v_i(bd\cG_3))$ is on or above the line $\ell$ for all $i=1,2,\cdots,d^3-d^2$.
\end{enumerate}
For (i), we observe that $\tau$ is divisible by $b^2$. Therefore, \begin{align*} bd\cG_3 & \equiv -bd(d-1)\sigma^{d-1} \\ & = -bd(d-1)((b+1)dF)^{d-1} \\ & = -bd(d-1)((b+1)((b+1)^d+d-1)^d d^{d^2+1})^{d-1} \\& \equiv -bd(d-1) (d^{d^2+d+1})^{d-1} = -bd^{d^3}(d-1) \pmod{b^2}.\end{align*} This proves (i).

For (ii), we observe that $\deg (\sigma) = \deg (\tau) = d^2+1$, so \[\deg (\sigma^k \tau^{d-1-k}) = (d^2+1)(d-1)=d^3-d^2+d-1\] for all $k$. However, in the expression \eqref{eq12} of $\tau$, $(b+1)^{2d} d^{d^2+d}$ has degree $2d$, so any term in $\sigma^k \tau^{d-1-k}$ which comes from multiplying $(b+1)^{2d} d^{d^2+d}$ has degree at most \[(d^3-d^2+d-1)-(d^2-2d+1) = d^3-2d^2+3d-2 < d^3-d^2.\] In other words, $(b+1)^{2d} d^{d^2+d}$ cannot affect the terms in \eqref{eq16} with degree at least $d^3-d^2$, or equivalently those terms are the same when we replace $\tau$ by $-(bd+1)F$ in \eqref{eq16}. However, then \eqref{eq16} becomes \begin{align*}  &-bd(d-1)\sigma^{d-1} -  (bd+1) \sum_{k=0}^{d-2} \binom{d}{k} \sigma^k (-(bd+1)F)^{d-1-k}\\ =& -bd(d-1) ((b+1)dF)^{d-1} \\&- (bd+1) \sum_{k=0}^{d-2} \binom{d}{k} ((b+1)dF)^k (-(bd+1)F)^{d-1-k}  \\ =& F^{d-1} \Biggl[-bd(d-1)((b+1)d)^{d-1} \Biggr. \\& \Biggl.- (bd+1) \sum_{k=0}^{d-2} \binom{d}{k} ((b+1)d)^k (-(bd+1))^{d-1-k}\Biggr] \\ =& F^{d-1} \left[ -bd(d-1)((b+1)d)^{d-1} + \sum_{k=0}^{d-2} \binom{d}{k} ((b+1)d)^k (-(bd+1))^{d-k} \right] \\ =& F^{d-1} \sum_{k=0}^d \binom{d}{k} ((b+1)d)^k (-(bd+1))^{d-k} \\ =& F^{d-1} ((b+1)d-(bd+1))^d \\ =& F^{d-1} (d-1)^d=((b+1)^d+d-1)^{d^2-d} d^{d^3-d^2} (d-1)^d.\end{align*} The leading term is $d^{d^3-d^2} (d-1)^d b^{d^3-d^2}$, so $bd\cG_3$ also has degree $d^3-d^2$ and $v_{d^3-d^2} (bd\cG_3) = d^3-d^2$. This proves (ii).

Finally, for (iii), we investigate the $d$-Newton polygon of each term in \eqref{eq16}. First, the $d$-Newton polygon of $bd(d-1)\sigma^{d-1}$ is defined by four vertices \[(0,\infty),\ \ (1,d^3),\ \ (d^3-d^2+1,d^3-d^2+d),\ \ (d^3-d^2+d,d^3-d^2+d),\] and all of these vertices are on or above $\ell$. On the other hand, if $k\ne 0$, then the $d$-Newton polygon of \[(bd+1)\binom{d}{k} \sigma^k \tau^{d-1-k}\] is defined by seven vertices \[(0,\infty),\ \ (3(d-1-k),d^3),\ \ ((d+1)(d-1-k),d^3-(d-1-k)),\] \[(d^2(d-1-k),d^3-(d+1)(d-1-k)),\] \[(d^2(d-1),d^3-(d+1)(d-1-k)-dk),\] \[ (d^2(d-1)+k,d^3-(d+1)(d-1-k)-dk),\] \[(d^2(d-1)+d,d^3-(d+1)(d-1-k)-dk+d-k),\] (note that $\binom{d}{k}$ gives the additional $d$-valuation of $1$) and all of these vertices are on or above $\ell$. Even if $k =0$, the $d$-Newton polygon of $(bd+1)\tau^{d-1}$ is defined by four vertices \[(3(d-1),d^3-1),\ \ ((d+1)(d-1),d^3-d),\] \[(d^2(d-1),d^3-d^2),\ \ (d^2(d-1)+d,d^3-d^2+d),\] and all of these vertices are on or above $\ell$. Then the non-Archimedean triangle inequality of the $d$-adic valuation proves (iii).

Now (i), (ii), and (iii) say that $N_d(bd\cG_3)$ is defined by three vertices $(0,\infty)$, $(1,d^3)$, and $(d^3-d^2,d^3-d^2)$, or equivalently $N_d(\cG_3)$ is defined by two vertices $(0,d^3-1)$ and $(d^3-d^2-1,d^3-d^2-1)$. Since the line segment between those two points contains no lattice point except for the endpoints, Corollary \ref{eisenstein} says that $\cG_3$ is irreducible over $\bQ$.
\end{proof}

\section*{Acknowledgement}

The author thanks Joseph H. Silverman for his advice on setting the topic and helpful comments. The author also appreciates the referee for their careful review and detailed comments and suggestions. They helped a lot to improve the paper.


\normalsize

\end{document}